\newcolumntype{M}{>{\centering\arraybackslash}m{2.5cm}}
\newcolumntype{S}{>{\centering\arraybackslash}m{1.8cm}}
\newtheorem{theorem}{\bf Theorem}[section]
\newtheorem{lemma}[theorem]{\bf Lemma}
\newtheorem{remark}[theorem]{\bf Remark}
\newenvironment{proof}{\noindent{\em Proof:}}{\quad \hfill$\Box$\vspace{2ex}}
\numberwithin{equation}{section}
\title{Sinh regularized Lagrangian nonuniform sampling series}
\author{Haixin Jiang\thanks{Department of Mathematics, Jiujiang University, Jiujiang, 332000, P. R. China. E-mail address: {\it 6070124@jju.edu.cn}.}, \quad Xinyu Chen\thanks{Faculty of Innovation Engineering, Macau university of science and technology, Macau, 999078, P. R. China. E-mail address: {\it 1220001051@student.must.edu.mo}.}, \quad Liang Chen\thanks{ Corresponding Author.   Department of Mathematics, Jiujiang University, Jiujiang, 332000, P. R. China. E-mail address: {\it chenliang3@alumni.sysu.edu.cn}.   ORCID iD: https://orcid.org/0000-0003-3750-1071}} 
\date{}
\begin{document}
\maketitle

\begin{abstract}
Recently, some window functions have been introduced into the nonuniform fast Fourier transform and the regularized Shannon sampling. Inspired by these works, we utilize a sinh-type function to accelerate the convergence of the Lagrangian nonuniform sampling series. Our theoretical error estimates and numerical experiments demonstrate that the sinh regularized nonuniform sampling series achieves a superior convergence rate compared to the fastest existing Gaussian regularized nonuniform sampling series.
\end{abstract}

\noindent{\bf Keywords:} nonuniform sampling; bandlimited functions; Weierstrass factorization

\noindent{\bf MSC 2020:} 41A25, 94A20
\section{Introduction}

The celebrated Shannon sampling theorem states that a continuous bandlimited signal can be perfectly reconstructed from its discrete samples. In practice, only a finite number of sample points are available, thus necessitating the consideration of truncation errors for the sampling series. The order of the truncation error (or convergence rate) for the Shannon sampling series is merely $\sqrt{1/N}$, where $N$ denotes the number of samples. To accelerate the convergence rate, oversampling and regularization tricks have been employed \cite{jagerman1966bounds,Wei1997}, achieving exponentially decaying truncation errors, that is,
$$\left|f(x)-\sum_{n=-N}^{N}f(n)\frac{\sin(\pi(x-n))}{\pi(x-n)}g_{N}(x-n)\right|\le \mathcal{O}(\exp(-CN(\pi-\delta))), $$
where $f$ is a function with bandwidth $\delta<\pi$, $g_{N}$ is some regularization function, and  $C$ is some constant.

Compared to uniform sampling (Shannon sampling), nonuniform sampling offers greater flexibility in adapting to complex scenarios and hardware constraints, and has been employed in multi-channel analog-to-digital conversion. The investigation of nonuniform sampling has consistently attracted attention from both the signal processing \cite{selva2008functionally,margolis2008nonuniform,maymon2011sinc,JOHANSSON20152415} and applied mathematics communities \cite{Flornes1999,zayed2001lagrange,strohmer2006fast,annaby2016bounds,adcock2017density,wang2019gaussian,ghosh2023sampling}. The objective of this paper is to improve the reconstruction accuracy of bandlimited functions under the nonuniform sampling through the oversampling and the regularization trick. We consider the regularized Lagrangian nonuniform  sampling series of the following form
$$\sum_{j=-N}^{N} f(\lambda_{j})\frac{F(x)}{F^{'}(\lambda_{j})(x-\lambda_{j})}g_{N}(x-\lambda_{j}),$$
where $F$ is some entire function with zero point set $\{\lambda_{j}\}_{j=-N}^{N}$, and $g_{N}$ is some regularization function.

Recently, \cite{yang2025exponential} extended the Gaussian regularization method (originally from Gaussian regularized Shannon sampling \cite{Wei1998,Wei2000,Qian2003,Tanaka2008,Micchelli2009,Lin2017,chen2019sharp,lin2019optimal,asharabi2022multidimensional,chen2023hyper}) to Lagrangian nonuniform sampling by applying the residue theorem. Before this, \cite{asharabi2023periodic,asharabi2024modification}   applied the Gaussian regularization method to periodic nonuniform sampling series, which is a specific case of Lagrange sampling series. Inspired by \cite{kircheis2024optimal}, we employ the 
following sinh-type regularization function $\varphi_{\beta,m}$ (defined in Eq.(\ref{eq11})) to the Lagrangian nonuniform sampling.
\begin{equation}\label{eq11} 
\varphi_{\beta,m}(x) := 
\begin{cases} 
\frac{1}{\sinh \beta} \sinh \left( \beta \sqrt{1 - \frac{x^2}{m^2}} \right) & : x \in [-m, m], \\ 
0 & : x \in \mathbb{R} \setminus [-m, m].
\end{cases}
\end{equation}
The sinh-type window function has been applied to the regularized Shannon sampling series \cite{Kircheis2022,filbir2023regularized,Kircheis2024} and the nonuniform fast Fourier transform \cite{Plonka2023,kircheis2025nonequispaced}. We prove that  the sinh regularized nonuniform series can achieve a convergence rate nearly twice that of the Gaussian regularized nonuniform sampling series.

\section{Sinh regularized Lagrangian nonuniform sampling series}
For simplicity, we write $ g_1(t)\lesssim g_2(t)$ if $g_1(t)\le C g_2(t)$ for every $t\in \Omega$, where $C$ is some absolute constant and $\Omega$ is the domain of $t$. We write $ g_1(t)\asymp g_2(t)$ if $ g_1(t)\lesssim g_2(t)$ and $ g_2(t)\lesssim g_1(t)$.
For the sake of theoretical analysis, we consider bandlimited functions defined on the Paley-Wiener space. For the bandwidth $\delta>0$, the Paley-Wiener space is defined by 
\[
\mathcal{B}_{\delta}(\mathbb{R}) := \left\{ f \in C(\mathbb{R}) \cap L^2(\mathbb{R}): {\rm supp} \hat{f} \subseteq [-\delta,\delta] \right\},
\]
where $\delta>0$ (bandwidth parameter) determines the frequency support of the Paley-Wiener space  and the Fourier transform $ \hat{f}$ of $f\in L^{1}(\mathbb{R})$  is defined by
$$\hat{f}(w):=\frac{1}{\sqrt{2\pi}}\int_{-\infty }^{ +\infty} f(x) e^{-iwx}dx.$$

For the general regularized sampling series of the function  $f\in \mathcal{B}_{\delta}(\mathbb{R})$, we establish the following result, which reveals that the error is related to the time-frequency concentration of the regularization function.

\begin{lemma}\label{L11}
Let $ N\in \mathbb{N}$, $0<\delta<\pi$, $f\in \mathcal{B}_{\delta}(\mathbb{R})$ and $g_{N}\in C(\mathbb{R})\cap L^{2}(\mathbb{R})$ with $g_{N}(0)=1$ is a regularization function depending on $N$.
Assume $\{\phi_{j}\}_{j=1}^{\infty}$ is a family of basis functions in $\mathcal{B}_{\pi}(\mathbb{R})$ and $\{x_{j}\}_{j=1}^{\infty}\subseteq\mathbb{R}$ is a sample set such that for any $h\in \mathcal{B}_{\pi}(\mathbb{R})$,  the following  interpolation formula holds \begin{equation}\label{eq1}h(x)=\sum_{j=1}^{\infty}h(x_{j})\phi_{j}(x)  \quad\text{for}\quad x\in\mathbb{R}.\end{equation}  
Then, 
$$\sup_{x\in [-1,1]}|f(x) - S_{f,N}(x)|\le E_{1,N}+E_{2,N},$$
where \begin{equation}\label{eqsfn}S_{f,N}(x):=\sum_{j=1}^{N}f(x_{j})\phi_{j}(x)g_{N}(x-x_{j}),\end{equation}

$$E_{1,N}:=\sup_{x\in [-1,1]}\left\{ \frac{2}{\sqrt{2\pi}}\sum_{j=1}^{\infty}|f(x_j)\phi_{j}(x)|\left(\int_{\pi-\delta}^{\infty}|\widehat{g_{N}}(w)|dw+ \int_{-\infty}^{\delta-\pi}|\widehat{g_{N}}(w)|dw\right)\right\}$$ and $$E_{2,N}:=\sup_{x\in [-1,1]}\left\{ \sum_{j=N+1}^{\infty}|f(x_j)\phi_{j}(x)g_{N}(x-x_{j})| \right\}.$$
\end{lemma}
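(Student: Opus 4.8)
The plan is to split the error through the full (untruncated) regularized series and reduce the main term to a frequency-domain estimate. Define $\mathcal{G}_N(x) := \sum_{j=1}^\infty f(x_j)\phi_j(x)g_N(x-x_j)$. By the triangle inequality,
$$|f(x) - S_{f,N}(x)| \le |f(x) - \mathcal{G}_N(x)| + \sum_{j=N+1}^\infty |f(x_j)\phi_j(x)g_N(x-x_j)|,$$
and taking the supremum over $x \in [-1,1]$ bounds the second summand directly by $E_{2,N}$. It then remains to show $\sup_{x\in[-1,1]}|f(x) - \mathcal{G}_N(x)| \le E_{1,N}$.

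For the main term, since $\delta < \pi$ we have $f \in \mathcal{B}_\delta(\mathbb{R}) \subseteq \mathcal{B}_\pi(\mathbb{R})$, so the interpolation formula (\ref{eq1}) applies to $f$ and gives $f(x) = \sum_{j=1}^\infty f(x_j)\phi_j(x)$; hence $f(x) - \mathcal{G}_N(x) = \sum_{j=1}^\infty f(x_j)\phi_j(x)\big(1 - g_N(x-x_j)\big)$. Using Fourier inversion together with the normalization $g_N(0)=1$, I would write $1 - g_N(x - x_j) = \frac{1}{\sqrt{2\pi}}\int_{\mathbb{R}}\widehat{g_N}(w)\big(1 - e^{iw(x-x_j)}\big)\rd w$, and then interchange the sum over $j$ with the integral in $w$ to obtain
$$f(x) - \mathcal{G}_N(x) = \frac{1}{\sqrt{2\pi}}\int_{\mathbb{R}}\widehat{g_N}(w)\Big[f(x) - e^{iwx}\sum_{j=1}^\infty f(x_j)e^{-iwx_j}\phi_j(x)\Big]\rd w.$$

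The crux of the argument is the observation that the inner bracket, which equals $\sum_{j=1}^\infty f(x_j)\phi_j(x)\big(1 - e^{iw(x-x_j)}\big)$, vanishes on the band $w \in [\delta-\pi,\, \pi-\delta]$. Indeed, for such $w$ the modulated function $f_w(t) := f(t)e^{-iwt}$ has Fourier transform $\widehat{f_w} = \hat{f}(\cdot + w)$ supported in $[-\delta-w,\, \delta-w] \subseteq [-\pi,\pi]$, so $f_w \in \mathcal{B}_\pi(\mathbb{R})$ and (\ref{eq1}) yields $\sum_{j=1}^\infty f(x_j)e^{-iwx_j}\phi_j(x) = f(x)e^{-iwx}$, making the bracket identically zero. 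Consequently the integral collapses to $w \in (-\infty, \delta-\pi) \cup (\pi-\delta, \infty)$, and there I estimate the bracket crudely by $\sum_{j=1}^\infty |f(x_j)\phi_j(x)|\,|1 - e^{iw(x-x_j)}| \le 2\sum_{j=1}^\infty|f(x_j)\phi_j(x)|$. Pulling this $w$-independent factor out of the integral and integrating $|\widehat{g_N}|$ over the two tails reproduces exactly $E_{1,N}$ after taking $\sup_{x\in[-1,1]}$, which combined with the first paragraph gives the claim.

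The step I expect to require the most care is the interchange of summation and integration leading to the displayed integral: this needs $\widehat{g_N} \in L^1(\mathbb{R})$ (so that the pointwise Fourier inversion holds and $g_N(0) = \frac{1}{\sqrt{2\pi}}\int_{\mathbb{R}}\widehat{g_N} = 1$) together with absolute summability of $\sum_{j=1}^\infty |f(x_j)\phi_j(x)|$ uniformly on $[-1,1]$, which is precisely what legitimizes the Fubini exchange and the final tail bound; should the right-hand side be infinite the inequality is vacuous, so these quantities may be assumed finite. The band-limiting identity in the third paragraph is the conceptual heart of the proof and explains why the cut-off frequencies are exactly $\pi-\delta$ and $\delta-\pi$.
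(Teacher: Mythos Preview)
Your argument is correct and is essentially the Fourier-mode unfolding of the paper's proof: where you apply the interpolation formula to each modulated function $t\mapsto f(t)e^{-iwt}\in\mathcal{B}_\pi$ for $|w|\le\pi-\delta$ and then integrate over $w$, the paper first integrates to form the truncated regularizer $G_N(x)=\frac{1}{\sqrt{2\pi}}\int_{\delta-\pi}^{\pi-\delta}\widehat{g_N}(w)e^{iwx}\,dw$ and applies interpolation once to $x\mapsto f(x)G_N(t-x)\in\mathcal{B}_\pi$, then sets $t=x$. The two routes are Fubini-equivalent; the paper's packaging has the mild advantage of sidestepping the explicit sum--integral interchange you flag, while your version makes the role of the band $[\delta-\pi,\pi-\delta]$ more transparent.
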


\begin{proof}
For $x\in \mathbb{R}$, let  $$S_{f,\infty}(x):=\sum_{j=1}^{\infty}f(x_{j})\phi_{j}(x)g_{N}(x-x_{j}),$$
and 
$$f(x) - S_{f,N}(x)=:E_{1}(x)+E_{2}(x),$$
where $$E_{1}(x):= f(x)-S_{f,\infty}(x),$$ and $$E_{2}(x):= S_{f,\infty}(x)-S_{f,N}(x).$$
It is obvious that
$\sup_{x\in[-1,1]}|E_{2}(x)|\le E_{2,N}$.
Let $$G_{N}(x)=\frac{1}{\sqrt{2\pi}}\int_{\delta-\pi}^{\pi-\delta}\widehat{g_N}(w)e^{iwx}dw$$ and $$G_{t,N}(x)=G_{N}(t-x).$$ Since ${\rm supp} \widehat{G_{t,N}} \subseteq [\delta-\pi,\pi-\delta] $, $${\rm supp}\widehat{G_{t,N}}*\widehat{f} \subseteq {\rm supp}\widehat{G_{t,N}}+ {\rm supp}\widehat{f} \subseteq[-\pi,\pi].$$ Note that $\widehat{G_{t,N}f}=\widehat{G_{t,N}}*\widehat{f} $,  we have $f(x)G_{N}(t-x)\in  \mathcal{B}_{\pi}(\mathbb{R})$. Using Eq.(\ref{eq1}), we have 
$$f(x)G_{N}(t-x)=\sum_{j=1}^{\infty}f(x_{j})\phi_{j}(x)G_{N}(t-x_{j}).$$
Choosing $t=x$, it follows that
$$f(x)G_{N}(0)=\sum_{j=1}^{\infty}f(x_{j})\phi_{j}(x)G_{N}(x-x_{j})=:s_{f,\infty}(x).$$ Therefore,
$$|E_{1}(x)|=|f(x)-f(x)G_{N}(0)+s_{f,\infty}(x)-S_{f,\infty}(x)|.$$
Noting that $g_{N}(0)=1$ and 
$$g_{N}(x)-G_{N}(x)=\frac{1}{\sqrt{2\pi}}\int_{\pi-\delta}^{\infty}\widehat{g_{N}}(w)e^{iwx}dw+\frac{1}{\sqrt{2\pi}}\int_{-\infty}^{\delta-\pi}\widehat{g_{N}}(w)e^{iwx}dw,$$
we have $\sup_{x\in[-1,1]}|E_{1}(x)|\le E_{1,N}$.

\end{proof}

Next, we give the definition of the sine-type functions \cite{levin1996lectures}. We say an entire function $f$ is a sine-type function with bandwidth $\delta>0$ if 

1. $\inf_{i\neq j }|\lambda_{i}-\lambda_{j}|\ge\lambda>0$, where $\{\lambda_{i}\}_{i=1}^{\infty}=:\Gamma\subseteq \mathbb{R}$ is the zero point set of the function $f$;

2. For every $ z\in\mathbb{C}$ satisfying $ \inf_{i\in \mathbb{N} }|z-\lambda_{i}|\ge \eta$, there exists a positive constant $c_{\eta}$  such that $$|f(z)|\ge c_{\eta}e^{\delta|Im(\mathrm z)|} ,$$ here $Im(\mathrm z)$ representing the imaginary part of $z$, and for every $z\in\mathbb{C}$, there exists  a positive constant  $C_{f,1}$ such that $$|f(z)|\le C_{f,1}e^{\delta|Im(\mathrm z)|} .$$

According to  [17, Theorem 1 in Lecture 23], when $F$ is a sine-type function with bandwidth $\delta$ and $f\in \mathcal{B}_{\delta}(\mathbb{R})$, the Lagrange interpolation formula holds, that is, $$f(x)=\sum_{j\in\mathbb{Z}} \frac{f(\lambda_{j})F(x)}{F^{'}(\lambda_{j})(x-\lambda_{j})}, \quad x\in \mathbb{R}.$$
For a sine-type function $F$ with bandwidth $\delta$, we will use the fact:  \begin{equation}\label{eq13}
\sum_{j\in\mathbb{Z}} \left|\frac{F(x)}{F^{'}(\lambda_{j})(x-\lambda_{j})}\right|^2\le C_{F, \Gamma} \quad \text{for} \quad x\in \mathbb{R},\end{equation}
where $\Gamma:=\{\lambda_{j}\}_{j\in \mathbb{Z}}\subset \mathbb{R}$ is the zero point set of $F$.   Using the definition of the sine-type function, we have 
$$\sup_{x\in \mathbb{R}}|F(x)|\le \sup_{x\in \mathbb{R}}C_{F,1}e^{\delta|Im(\mathrm x)|} =C_{F,1} .$$  Using the Cauchy integral formula for derivatives, we have \begin{equation*}
\begin{split}
\sup_{x\in\{t\in\mathbb{R}||t-\lambda_{j}|\le1\}}\left|\frac{F(x)}{x-\lambda_{j}}\right|&= \sup_{x\in\{t\in\mathbb{R}||t-\lambda_{j}|\le1\}}\left|\frac{F(x)-F(\lambda_{j})}{ x-\lambda_{j}}\right|\lesssim\sup_{\xi\in\{t\in\mathbb{R}||t-\lambda_{j}|\le1\}}\left|F^{'}(\xi)\right|\\&\lesssim \sup_{\xi\in\{t\in\mathbb{R}||t-\lambda_{j}|\le1\}}\int_{z\in\mathbb{C},|z-\xi|=1}\left|\frac{F(z)}{(z-\xi)^2}\right|dz\\&\lesssim \sup_{\xi\in\{t\in\mathbb{R}||t-\lambda_{j}|\le1\}}\int_{z\in\mathbb{C},|z-\xi|=1}\left| F(z) \right|dz\\&\lesssim  \sup_{z\in\mathbb{C},|Im(\mathrm z)|\le 1}|F(z)|
\lesssim \sup_{z\in\mathbb{C},|Im(\mathrm z)|\le 1}C_{F,1}e^{\delta|Im(\mathrm z)|}\\&\lesssim  C_{F,1}e^{\delta }=: C_{F,\delta}.\end{split}\end{equation*} For $s=\{z\in\mathbb{C}:|z-\lambda_{j}|=\frac{\lambda}{2}\}$ (the definition of $\lambda$ see from the definition of the sine-type function). By calculating residues, we have 
$|\frac{1}{F^{'}(\lambda_j)}|\asymp|\int_{s}\frac{1}{F(z)}ds |\lesssim \frac{1}{c_{\lambda}}\lambda$. Using the fact that $\inf_{i\neq j }|\lambda_{i}-\lambda_{j}|\ge\lambda>0$, $|\frac{F(x)}{F^{'}(\lambda_j)}|\lesssim \frac{C_{F,1}}{c_{\lambda}}\lambda$, and $\sup_{x\in\{t\in\mathbb{R}||t-\lambda_{j}|\le1\}}\left|\frac{F(x)}{x-\lambda_{j}}\right|\lesssim C_{F,\delta}$, we have
\begin{equation*}\begin{split}
\sum_{j\in\mathbb{Z}} \left|\frac{F(x)}{F^{'}(\lambda_{j})(x-\lambda_{j})}\right|^{2}&\lesssim \sum_{j\in\mathbb{Z},\lambda_{j}\in (x-1.x+1)} \left|\frac{F(x)}{F^{'}(\lambda_{j})(x-\lambda_{j})}\right|^{2}+\sum_{j\in\mathbb{Z},\lambda_{j}\notin (x-1.x+1)} \left|\frac{F(x)}{F^{'}(\lambda_{j})(x-\lambda_{j})}\right|^{2}\\&\lesssim \frac{\lambda C_{F,\delta}}{c_{\lambda}}\frac{2}{\lambda}+\frac{\lambda C_{F,1}}{c_{\lambda}}\sum_{j\in\mathbb{Z}}\frac{1}{(1+|j\lambda|)^2} =: C_{F, \Gamma} \quad \text{for} \quad x\in \mathbb{R}.\end{split} 
\end{equation*}

Furthermore, from [17,Theorem 1 in Lecture 23], we know that $\left\{\frac{f(\lambda_{j})F(x)}{F^{'}(\lambda_{j})(x-\lambda_{j})} \right\}_{j\in \mathbb{Z}}$ is a Riesz basis   in  $\mathcal{B}_{\delta}(\mathbb{R})$, and then \begin{equation}\label{eq24}(\sum_{j\in \mathbb{Z}}|f(\lambda_{j})|^2)^{1/2}\le C_{F}\|f\|_{L^2},\end{equation} where $C_{F}$ is some constant depending on $F$.

For practical considerations, we restrict our consideration to sampling series with explicit expressions (defined as finite compositions of elementary functions through arithmetic operations).

First, we investigate the periodic nonuniform sampling series \cite{strohmer2006fast,annaby2016bounds}. For any $f\in \mathcal{B}_{\pi}(\mathbb{R})$,  the periodic nonuniform sampling series takes the following form:
\begin{equation}\label{eq33}
f(x) = \sum_{n=-\infty}^{\infty} \sum_{m=1}^{M} f(\tau_{mn}) \psi_{mn}(x)
\end{equation}
where 
\[
\psi_{mn}(x) = \frac{M \prod_{k=1}^{M} \sin((\pi/M)(x - \tau_{kn}))}{\pi(x - \tau_{mn}) \prod_{k=1,k \neq m}^{M} \sin((\pi/M)(t_m - t_k))}
\]
and
\[
\tau_{mn} = t_m + nM, \quad 0 \leq t_1 < t_2 < \cdots < t_M < M, \, n \in \mathbb{Z}.
\]
In fact $$\psi_{mn}(x)=\frac{F_{per}(x)}{F_{per}^{'}(\tau_{mn})(x-\tau_{mn})},$$ where \begin{equation}\label{per}
F_{per}(x)=\prod_{k=1}^{M}\sin(\pi(x-t_{k})/M)\end{equation} is a sine-type function with bandwidth $\pi$.

In addition to periodically nonuniform sampling series, we also consider another type of nonuniform sampling series. To reconstruct the function values on $[-1,1]$, we consider a finite sampling point set $\Lambda:=\{\lambda_{j}\}_{j=-N}^{N}$. Following the works of \cite{selva2008functionally} and \cite{yang2025exponential}, we set 
\begin{equation}\label{genxin}
F_{\Lambda}(z)=\sin(\pi z) \prod_{j=- N }^{N} \frac{z-\lambda_j}{z-j}=:\sin(\pi z)R(z),
\end{equation}  where $\inf_{-N\le j<k\le N}|\lambda_{j}-\lambda_{k}|>0$, $\sup_{|j|\le N}|\lambda_{j}-j|:=L<1$. Let $\bar{\lambda}_{j}:=\lambda_j$ when $|j|\le N$ and $\bar{\lambda}_{j}:=j$ when $|j|>N$. 

 There exists some constant $C_{N,L,\Lambda}>1$ depending on $N$, $L$ and $\Lambda$ such that $\sup_{|z| \ge 4N+4L}|R(z)|\le C_{N,L,\Lambda}$. 
Due to $\sin(\pi z)$ is sine-type function with bandwidth $\pi$ , there exists some constant $c_{\eta}$ depending on $\eta>0$ such that $\sup_{z\in\mathbb{C}}|\sin(\pi z)| \lesssim e^{\pi|Im(\mathrm z)|}$ and  $\inf_{z\in\{y\in\mathbb{C}|\inf_{j\in\mathbb{Z}}|y-\bar{\lambda}_{j}|>\eta\}}|\sin(\pi z)| \ge c_{\eta} e^{\pi|Im(\mathrm z)|}$. Thus, for $z\in \{t\in\mathbb{C}||t|\ge 4N+4L\}$, we have $ |R(z)\sin(\pi z)|\le C_{N,L,\Lambda} e^{\pi|Im(\mathrm z)|} $. Since $\mathbb{Z}$ is zero point set of $\sin(\pi z)$, $\sin(\pi z)R(z)$ is entire function, it follows that
$$ \sup_{|z|<4N+4L}|R(z)\sin(\pi z)|\le \sup_{|z| = 4N+4L}|R(z)\sin(\pi z)|\le C_{N,L,\Lambda} e^{\pi(4N+4L)}  .$$
Therefore, $$ |R(z)\sin(\pi z)|\le C_{N,L,\Lambda} e^{\pi(4N+4L)}e^{\pi|Im(\mathrm z)|} \quad \text{for} \quad z\in\mathbb{C}.$$

Let $V_\eta:=\{y\in\mathbb{C}|\inf_{j\in\mathbb{Z}}|y-\bar{\lambda}_{j}|>\eta\}$, $0<\eta<1$. Note that $|z-\lambda_{j}|\ge\eta $ for $z\in V_\eta$; $|\frac{1}{z-j}|\ge \frac{1}{2N+L+1}$ for $|j|\le N$ and $|z|\le N+L+1$  , thus $$ \inf_{z\in V_\eta,|z|\le N+L+1}|G(z)|\ge \left(\frac{\eta}{2N+L+1}\right)^{2N+1}.$$ 
It follows that,  \begin{equation*}
\begin{split}
\inf_{z\in V_\eta} |G(z)|&\ge\min\left\{\inf_{z\in V_\eta,|z|\le N+L+1}|G(z)|,\inf_{z\in V_\eta,|z|\ge N+L+1}|G(z)|\right\}\\&\ge \min\left\{\inf_{z\in V_\eta,|z|\le N+L+1}\left(\frac{\eta}{2N+L+1}\right)^{2N+1},\inf_{z\in V_\eta,|z|\ge N+L+1}\prod_{j=-N}^{N}\left(1+\frac{j-\lambda_{j}}{z-j}\right)\right\}\\&\ge \min\left\{\left(\frac{\eta}{2N+L+1}\right)^{2N+1}, \left(1-\frac{L}{L+1}\right)^{2N+1}\right\}\\&=:C_{N,\Lambda,\eta}>0\end{split}\end{equation*} 
 Using the fact $\sin(\pi z)\ge C_{\eta}e^{\pi Im(\mathrm z)}$ (since $\sin(\pi z)$ is a sine-type function with bandwidth $\pi$) and the above inequality $\inf_{z\in V_\eta} |G(z)| \ge C_{N,\Lambda,\eta}$, we have \begin{equation*} \begin{split}
 \inf_{z\in V_\eta}|G(z)\sin(\pi z)|\ge C_{\eta}c_{N,\Lambda,\eta}e^{\pi|Im(\mathrm z)}|.\end{split}    \end{equation*} Therefore, Eq.(\ref{genxin}) is a sine-type function with bandwidth $\pi$.  For $j\in \mathbb{Z}$, let \begin{equation}\label{eq12}
Q_{\Lambda,j}(z):=\frac{ F_{\Lambda}(z) }{F_{\Lambda}^{'}(\bar{\lambda}_{j})(z-\bar{\lambda}_{j})}=\frac{R_{j}(z){\rm sinc}(z-j)}{R_{j}(\bar{\lambda}_{j}){\rm sinc}(\bar{\lambda}_{j}-j)},\end{equation}
where $R_{j}(z)=\frac{z-j}{z-\bar{\lambda}_{j}}R(z)$ . Since  Eq.(\ref{genxin}) is a sine-type function with bandwidth $\pi$, we have
\begin{equation}\label{eq44} f(x)= \sum_{j=-\infty}^{\infty}  f(\bar{\lambda}_{j})Q_{\Lambda,j}(x)\quad for \quad x\in \mathbb{R}. \end{equation}

For the nonuniform sampling series (\ref{eq44}) and (\ref{eq33}), using Lemma \ref{L11}, we have following theorem. 
\begin{theorem}
 Let $0<\delta<\pi$,  $N\in \mathbb{N}$ satisfying  $\beta:=(N-1)(\pi-\delta)\ge 1$ .
 For $f\in \mathcal{B}_{\delta}(\mathbb{R})$, define the sinh regularized  non-periodic nonuniform sampling series by $$S_{f,Q,N, \varphi}(x):=\sum_{j=-N}^{N} f(\lambda_{j})Q_{\Lambda,j}(x) \varphi_{\beta,N-1}(x-\lambda_j)$$ 
 and define the sinh regularized periodic nonuniform sampling series by $$S_{f,\psi,N,\varphi}(x):=  \sum_{n=-N}^{N} \sum_{m=1}^{M} f(\tau_{mn}) \psi_{mn}(x) \varphi_{\beta M,(N-1)M} (x-\tau_{mn}) .$$
Then
 \begin{equation}\label{511}    
 \sup_{x\in [-1,1]}|f(x) - S_{f,Q,N,\varphi}(x)|\lesssim C_{\Lambda}\beta\exp(-(N-1)(\pi-\delta))\|f\|_{L^2}  , \end{equation}
and  \begin{equation}\label{611}\sup_{x\in [-1,1]}|f(x) - S_{f,\psi,N,\varphi}(x)|\lesssim C_{F_{per}} M\beta\exp(-(N-1)M(\pi-\delta))\|f\|_{L^2} ,\end{equation}
where  $\Lambda:=\{\lambda_{j}\}_{j=-N}^{N}$, $\inf_{-N\le j<k\le N}|\lambda_{j}-\lambda_{k}|=\lambda>0$, $\sup_{|j|\le N}|\lambda_{j}-j|:=L<1$. $C_{ \Lambda}$ is some constant depending on $\Lambda$, $C_{F_{per}}$ is some constant depending on  $F_{per}$ (see  Eq.(\ref{per})).
\end{theorem}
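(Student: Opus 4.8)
The plan is to derive both estimates from Lemma \ref{L11} by choosing the basis functions, sample points, and regularizer appropriately in each case. For the non-periodic series I would take $\phi_j=Q_{\Lambda,j}$, $x_j=\bar\lambda_j$, and $g_N=\varphi_{\beta,N-1}$: the interpolation identity (\ref{eq44}) supplies the hypothesis (\ref{eq1}) of the lemma (with $\mathcal{B}_\pi(\mathbb{R})$ read as $\mathcal{B}_\delta(\mathbb{R})$, which is enough since $f\in\mathcal{B}_\delta(\mathbb{R})\subseteq\mathcal{B}_\pi(\mathbb{R})$), while $\varphi_{\beta,N-1}(0)=\tfrac{1}{\sinh\beta}\sinh\beta=1$ together with compact support gives $g_N\in C(\mathbb{R})\cap L^2(\mathbb{R})$. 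For the periodic series I would instead use $\phi=\psi_{mn}$, $x=\tau_{mn}$, and $g_N=\varphi_{\beta M,(N-1)M}$, the interpolation hypothesis now coming from (\ref{eq33}). It then suffices to bound the two terms $E_{1,N}$ and $E_{2,N}$ produced by the lemma.

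The truncation term $E_{2,N}$ vanishes identically, and this is exactly where the choice of the dilation parameter matters. In the non-periodic case the discarded indices satisfy $|j|>N$, so $\bar\lambda_j=j$, and for $x\in[-1,1]$ we get $|x-\bar\lambda_j|\ge |j|-1\ge N>N-1$; since $\varphi_{\beta,N-1}$ is supported in $[-(N-1),N-1]$ and vanishes at its endpoints, every discarded summand carries the factor $\varphi_{\beta,N-1}(x-\bar\lambda_j)=0$, whence $E_{2,N}=0$. The periodic case is the same computation: for $|n|>N$ one has $|x-\tau_{mn}|\ge NM-1\ge (N-1)M$ (using $M\ge 1$ and $0\le t_m<M$), so $\varphi_{\beta M,(N-1)M}(x-\tau_{mn})=0$ and again $E_{2,N}=0$.

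For $E_{1,N}$ I would split the bound of Lemma \ref{L11} into a coefficient sum and a frequency tail. By Cauchy--Schwarz, $\sum_j |f(\bar\lambda_j)Q_{\Lambda,j}(x)|\le \big(\sum_j |f(\bar\lambda_j)|^2\big)^{1/2}\big(\sum_j |Q_{\Lambda,j}(x)|^2\big)^{1/2}$, where the first factor is $\lesssim \|f\|_{L^2}$ by the Riesz-basis bound (\ref{eq24}) and the second is uniformly bounded by $\sqrt{C_{F,\Gamma}}$ via (\ref{eq13}); this yields the constant $C_\Lambda$ (respectively $C_{F_{per}}$ after the analogous step for $\psi_{mn}$ with $F_{per}$). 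For the tail I would compute $\widehat{\varphi_{\beta,m}}$ explicitly: the substitution $x=mu$ reduces it to the classical Bessel integral, giving for $|w|>\beta/m$ a multiple of $\tfrac{m\beta}{\sinh\beta}\,\tfrac{J_1(\sqrt{(mw)^2-\beta^2})}{\sqrt{(mw)^2-\beta^2}}$. The decisive point is that with $\beta=(N-1)(\pi-\delta)$ and $m=N-1$ the transition frequency $\beta/m$ equals exactly the band edge $\pi-\delta$, so the whole tail $|w|\ge\pi-\delta$ lies in the oscillatory ($J_1$) regime. Substituting $t=\sqrt{(mw)^2-\beta^2}$ then gives
\[
\int_{\pi-\delta}^{\infty}|\widehat{\varphi_{\beta,m}}(w)|\,dw\lesssim \frac{\beta}{\sinh\beta}\int_0^{\infty}\frac{|J_1(t)|}{\sqrt{t^2+\beta^2}}\,dt\lesssim \frac{\sqrt\beta}{\sinh\beta}\lesssim \beta\,e^{-\beta},
\]
where the middle step uses $|J_1(t)|\lesssim \min(t,t^{-1/2})$ and the last uses $\beta\ge 1$ (so $\sqrt\beta\le\beta$ and $\sinh\beta\asymp e^{\beta}$). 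Combining this with the coefficient bound gives (\ref{511}); the periodic estimate (\ref{611}) follows verbatim with $(\beta,m)$ replaced by $(\beta M,(N-1)M)$, whose transition frequency is still $\pi-\delta$ and whose prefactor $\sinh(\beta M)^{-1}$ produces the factor $\exp(-(N-1)M(\pi-\delta))$.

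The main obstacle is the frequency-tail estimate of the sinh window: one needs the exact Bessel-function form of $\widehat{\varphi_{\beta,m}}$ (or the corresponding bound from the cited works \cite{kircheis2024optimal,Kircheis2024}) and must then control $\int_0^{\infty}|J_1(t)|/\sqrt{t^2+\beta^2}\,dt$ uniformly in $\beta$, since it is precisely the near-critical behaviour of $J_1$ at the transition $|w|=\pi-\delta$ that fixes the polynomial prefactor $\beta$ (resp.\ $M\beta$). Everything else---the support-based vanishing of $E_{2,N}$ and the Cauchy--Schwarz reduction of the coefficient sum via (\ref{eq13}) and (\ref{eq24})---is routine once the scaling $m=N-1$ has been identified as the mechanism aligning the window's spectral transition with the band edge.
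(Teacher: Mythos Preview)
Your proposal is correct and follows essentially the same route as the paper: apply Lemma~\ref{L11} with $g_N=\varphi_{\beta,N-1}$ (resp.\ $\varphi_{\beta M,(N-1)M}$), kill $E_{2,N}$ by the compact support of the sinh window, bound the coefficient sum in $E_{1,N}$ via Cauchy--Schwarz together with (\ref{eq13}) and (\ref{eq24}), and control the frequency tail through the Bessel-function expression for $\widehat{\varphi_{\beta,m}}$. The only cosmetic difference is that the paper works in the variable $v=(N-1)w/\beta$ and splits the integral at $v=\sqrt{1+1/\beta^2}$, whereas your substitution $t=\sqrt{(mw)^2-\beta^2}$ lands directly on $\int_0^\infty |J_1(t)|/\sqrt{t^2+\beta^2}\,dt$ and even yields the marginally sharper prefactor $\sqrt{\beta}$ before you relax it to $\beta$; one small caveat is that Lemma~\ref{L11} needs the interpolation identity for all of $\mathcal{B}_\pi(\mathbb{R})$ (it is applied to $fG_N(t-\cdot)\in\mathcal{B}_\pi$), which holds here because $F_\Lambda$ and $F_{per}$ are sine-type of bandwidth $\pi$, not merely because $f\in\mathcal{B}_\delta\subset\mathcal{B}_\pi$.
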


\begin{proof}
The main idea of the proof is to apply the conclusion of Lemma \ref{L11} (choosing $g_{N}=\varphi_{\beta,N-1}$). Since $\varphi_{\beta,N-1}$ has compact support, it suffices to estimate $E_{1,N}$, and in particular, to estimate $\int_{\pi-\delta}^{\infty}|\widehat{\varphi_{\beta,N-1}}(w)|dw$.

From [16, Eq.(4.8)], we know that
\begin{equation} 
\widehat{\varphi_{\beta,N-1}}(w)=
\frac{(N-1)\sqrt{\pi}}{\sqrt{2}\sinh(\beta)}\cdot \frac{J_{1}(\beta\sqrt{v^2-1})}{(v^2-1)^{1/2}}\quad \text{for}\quad |v|>1 ,\end{equation} 
where $v=\frac{(N-1)w}{\beta}$ and $J_1$ denotes the first order Bessel function \cite{watson1922treatise}. Then 
$$\int_{-\infty}^{\delta-\pi}|\widehat{\varphi_{\beta,N-1}}(w)|dw=\int_{\pi-\delta}^{+\infty}|\widehat{\varphi_{\beta,N-1}}(w)|dw=\frac{\beta\sqrt{\pi}}{\sqrt{2}\sinh(\beta)}\int_{1}^{\infty} \left|\frac{J_{1}(\beta\sqrt{v^2-1})}{(v^2-1)^{1/2}}\right|dv.$$
Since $|J_{1}(x)|\lesssim\left|\sqrt{\frac{2}{\pi x}}\cos(x-\frac{3\pi}{4})\right| \quad for \quad |x|>1$ (see [30, page 199]); $|J_{1}(x)|\lesssim|\frac{x}{4}|\quad \text{for} \quad |x|< 1$ (see [30, page 40]), we have $$|J_{1}(\beta\sqrt{v^2-1})|\lesssim \beta\sqrt{v^2-1}\quad \text{for}\quad  v\in(1,\sqrt{1+1/\beta^2})$$ and $$|J_{1}(\beta\sqrt{v^2-1})|\lesssim 1/\sqrt{\beta\sqrt{v^2-1}}\quad \text{for}\quad  v\in(\sqrt{1+1/\beta^2},\infty),$$  where $\beta\ge 1$. Thus, for $\beta\ge 1$, we have 
\begin{equation*}\begin{split}
\int_{1}^{\infty} \left|\frac{J_{1}(\beta\sqrt{v^2-1})}{(v^2-1)^{1/2}}\right|dv&=  \int_{1}^{\sqrt{1+1/\beta^2}} \left|\frac{J_{1}(\beta\sqrt{v^2-1})}{(v^2-1)^{1/2}}\right|dv+\int_{\sqrt{1+1/\beta^2}}^{\infty} \left|\frac{J_{1}(\beta\sqrt{v^2-1})}{(v^2-1)^{1/2}}\right|dv\\ &\lesssim \int_{1}^{\sqrt{1+1/\beta^2}} \left|\frac{\beta\sqrt{v^2-1}}{(v^2-1)^{1/2}}\right|dv+\int_{\sqrt{1+1/\beta^2}}^{\infty} \left|\frac{1/\sqrt{\beta\sqrt{v^2-1}}}{(v^2-1)^{1/2}}\right|dv\\&\lesssim \beta(\sqrt{1+1/\beta^2}-1)+\int_{\sqrt{1+1/\beta^2}}^{\infty} \left|\frac{1/\sqrt{\beta\sqrt{v^2-1}}}{(v^2-1)^{1/2}}\right|dv
\\&\lesssim  1+\frac{1}{\sqrt{\beta}}\int_{\sqrt{1+1/\beta^2}}^{\infty} \left|\frac{1}{(v^2-1)^{3/4}}\right|dv\\& =1+\frac{1}{\sqrt{\beta}}\int_{1/\beta}^{\infty}\frac{y}{\sqrt{y^{2}+1}}\frac{1}{y^{3/2}}dy\\&\le 1+\frac{1}{\sqrt{\beta}}\int_{1/\beta}^{\infty} \frac{1}{y^{3/2}}dy
\lesssim 1,
\end{split}
\end{equation*}
where we used the estimate $\beta(\sqrt{1+1/\beta^2}-1)= \frac{\beta(1/\beta^2)}{\sqrt{1+1/\beta^2}+1}\le 1/\beta\le 1 \quad \text{for} \quad \beta>1 $.

For $\beta \ge 1$,  $\sinh(\beta)=\frac{\exp(\beta)-\exp(-\beta)}{2}\asymp \exp(\beta) $. Using Eq.(\ref{eq13}), Eq.(\ref{eq24}) and Cauchy-Schwarz inequality, we have
\begin{equation*}\begin{split}\sup_{x\in [-1,1]}(\sum_{j=-\infty}^{\infty}|f(\lambda_j)Q_{\Lambda,j}(x)| )&\left(\int_{\pi-\delta}^{\infty}|\widehat{\varphi_{\beta,N+1}}(w)|dw+ \int_{-\infty}^{\delta-\pi}|\widehat{\varphi_{\beta,N+1}}(w)|dw\right)\\&\lesssim C_{\Lambda} \beta\exp(-(N-1)(\pi-\delta))\|f\|_{L^2}  ,\end{split}\end{equation*} and 
$$\sup_{x\in [-1,1]}(\sum_{j=N+1}^{\infty}|f(\lambda_j)Q_{\Lambda,j}\exp(-\frac{\pi-\delta}{2T-2  }(x-\lambda_{j})^2)|)=0,$$
where $C_{\Lambda}$ is some constant depending on $\Lambda$. Using Lemma \ref{L11} (Choosing $g_{N}=\varphi_{\beta,N-1}$), we proved Eq.(\ref{511}).

Similarly, from Eqs.(\ref{eq13}, \ref{eq24}),  we have \begin{equation*}\begin{split}\sup_{x\in [-1,1]}(\sum_{n=-\infty}^{\infty}\sum_{m=1}^{M}|f(\tau_{mn})\psi_{mn}(x) |)&\left(\int_{\pi-\delta}^{\infty}|\widehat{\varphi_{\beta M,(N-1)M}}(w)|dw+ \int_{-\infty}^{\delta-\pi}|\widehat{\varphi_{\beta M,(N-1)M}}(w)|dw\right)\\&\lesssim C_{F_{per}}\beta M\exp(-NM(\pi-\delta))\|f\|_{L^2},\end{split}\end{equation*}
and 
$$\sup_{x\in [-1,1]}(\sum_{n=N+1}^{\infty}\sum_{m=1}^{M}|f(\tau_{mn})\psi_{mn}(x)\varphi_{\beta M,(N-1)M} (x-\tau_{mn})|)=0.$$
 Eq.(\ref{611}) is proved by using Lemma \ref{L11}.

\end{proof}

\begin{remark}
Eq.(\ref{511}) improves the exponential error term $\exp(-N(\pi -\delta)/2)$ in Gaussian regularized nonuniform sampling \cite{yang2025exponential} to $\exp(-(N-1)(\pi -\delta))$. Here, for a finite sample set $\Lambda$, we only require $L<1$ rather than $L<1/2$ imposed in \cite{yang2025exponential}.
\end{remark}

\begin{remark}
For the sake of simplicity, we do not consider bandlimited functions in the Bernstein space $\mathcal{B}_{\delta}^{\infty}(\mathbb{R})$ (we consider the Paley-Wiener space $\mathcal{B}_{\delta}^{2}(\mathbb{R})$ ), as this would require a slightly more refined estimate of the term $\int_{\pi-\delta}^{\infty}\widehat{g_{N}}(w)e^{iwx}dw$ in Lemma \ref{L11}.  
\end{remark}

\section{Numerical experiments}

In this section, we present numerical experiments to illustrate the performance of the sinh regularized  non-periodic nonuniform sampling series and also  the sinh-type regularized periodic nonuniform sampling series.  We validate our convergence analysis to reconstruct a given bandlimited function $f_{_\delta}\in \mathcal{B}_\delta (\mathbb{R})$ \cite{yang2025exponential}, 
\begin{equation}
\label{funcexample}
f(x) = \frac{1}{\sqrt{\pi(5\delta + \sin (\delta))}} \left( \frac{2 \sin (\delta x)}{x} + \frac{\sin (\delta (x - 1))}{x - 1} \right), \quad x \in \mathbb{R},
\end{equation}
for several bandwidth parameters $\delta \in \{\pi/2, 2\pi/3, 5\pi/6\}$.

 By the definition of the regularized  sampling formula \eqref{eqsfn}, we have
\[
S_{f,N}(x)=\sum_{j=-N}^{N}f(\lambda_j)\phi_{j}(x)g_{N}(x-\lambda_j),\quad x\in[-1,1].
\]

The error shall be estimated by evaluating the given function $f$ and its approximation $S_{f,N}$ at equidistant points $\frac{j}{100}$ with $ j \in \{-100,-99,\cdots,99,100\}$. The corresponding reconstruction errors is measured by
\begin{equation}
\label{errormeasure}  
\max_{-100\leq j \leq 100}\left|f\left(\frac{j}{100}\right)-S_{f,N}\left(\frac{j}{100}\right)\right|.
\end{equation}

First, we consider the sinh regularized  non-periodic nonuniform sampling series, i.e., let $\phi_{j}=Q_{\Lambda,j}$ and $g_{N}=\varphi_{\beta,N-1}$.  Analogous to \cite{yang2025exponential}  we  assume that these sampling points are $\lambda_j = j + \epsilon_j : -N \leq j \leq N$, where  each $\epsilon_j$ is randomly sampled from the uniform distribution on $(-1, 1)$. We average the reconstruction errors over 100 independent random noises $(\epsilon_j)_{j=-N}^{N}$ to reduce the randomness of the results. For  comparison, we use the Gaussian regularized periodic nonuniform sampling series.

The Gaussian regularized non-periodic sampling series  takes the following form 
\begin{equation}
\label{gaussianformula}  
S_{f,Q,N,{\rm Gaussian}}(x)=\sum_{j=-N}^{N}f(\lambda_j)\frac{\text{sinc} (x-j)R_{j}(z)}{\text{sinc}(\lambda_j -j)R_{j}(\lambda_{j})}\exp{\left(-\frac{\pi-\delta}{2N-2}(x-\lambda_j)^2\right)}
\end{equation}
and  the sinh-type regularized non-periodic sampling series is
\begin{equation}
\label{sinhformula}    
S_{f,Q,N,\varphi}(x)=\sum_{j=-N}^{N}f(\lambda_j)\frac{\text{sinc} (x-j)R_{j}(z)}{\text{sinc}(\lambda_j -j)R_{j}(\lambda_{j})}\frac{1}{\sinh \beta} \sinh\left(\beta \sqrt{1 - \frac{(x-\lambda_j)^2}{(N-1)^2}}\right),
\end{equation}
where $\beta=(N-1)(\pi-\delta)$.

Likewise, for the sinh-type regularized periodic nonuniform sampling series, we also consider Gaussian regularized periodic nonuniform sampling series for comparison, which has been established in \cite{wang2019gaussian,asharabi2023periodic,asharabi2024modification}. We average the reconstruction errors over 100 independent random choices $(t_{m})_{m=1}^{M}$ from the uniform distribution on $[0,M)$.

The Gaussian regularized periodic nonuniform sampling series takes the following form
\begin{equation}
\label{periodicgaussianformula}  
\begin{aligned}
S_{f,\psi,N,{\rm Gaussian}}(x)= \sum_{n=-N}^{N} \sum_{m=1}^{M}  f(\tau_{mn}) &
\frac{M \prod_{k=1}^{M} \sin\left((\pi/M)(x - \tau_{kn})\right)}{\pi(x - \tau_{mn}) \prod_{\substack{k=1,k \neq m}}^{M} \sin\left((\pi/M)(t_m - t_k)\right)} \\
&\quad  \cdot  \exp{\left(-\frac{\pi-\delta}{2M(N-1)}(x-\tau_{mn})^2\right)}
\end{aligned}
\end{equation}
and   the sinh-type regularized periodic nonuniform sampling series is
\begin{equation}
\label{periodicsinhformula}  
\begin{aligned}
S_{f,\psi,N,\varphi}(x)= \sum_{n=-N}^{N} \sum_{m=1}^{M}  f(\tau_{mn}) &
\frac{M \prod_{k=1}^{M} \sin\left((\pi/M)(x - \tau_{kn})\right)}{\pi(x - \tau_{mn}) \prod_{\substack{k=1,k \neq m}}^{M} \sin\left((\pi/M)(t_m - t_k)\right)} \\
&\quad  \cdot    \frac{1}{\sinh (M\beta )} \sinh\left(M\beta  \sqrt{1 - \frac{(x-\tau_{mn})^2}{(M(N-1))^2}}\right),
\end{aligned}
\end{equation}
where $\beta=(N-1)(\pi-\delta)$ and $\tau_{mn} = t_m + Mn$. 

\begin{table}[h]
\centering
\caption{Reconstruction errors for $f$ with $\delta = \pi/2$ by  the sinh-type regularized non-periodic sampling formula (\ref{sinhformula}) and  periodic sampling formula (\ref{periodicsinhformula}). }
\label{tab:3pi6}
\begin{tabular}{c c c c c c c c}
\toprule
{N} & {no} & {Gaussian} & {sinh} &{N} & {periodic-no}& {periodic-Gaussian} & {periodic-sinh} \\
\cmidrule(lr){2-4} \cmidrule(lr){6-8}
6  & 1.8510e-01 & 9.2487e-02 & 2.6713e-03 & 2  & 2.6807e-03 & 2.9929e-03 & 7.5429e-05 \\
9  & 7.4302e-02 & 1.9225e-02 & 2.5750e-05 & 3  & 1.2073e-03 & 1.1243e-04 & 3.5914e-07 \\
12 & 1.6440e-02 & 1.5251e-03 & 1.7458e-07 & 4  & 7.7688e-04 & 6.0725e-06 & 2.6398e-09 \\
15 & 2.9575e-02 & 7.3920e-04 & 8.9431e-09 & 5  & 4.8864e-04 & 3.8323e-07 & 1.2914e-11 \\
18 & 5.9010e-03 & 1.7872e-05 & 1.7521e-11 & 6  & 3.6782e-04 & 2.6398e-08 & 6.0840e-14 \\
21 & 7.9550e-03 & 2.0863e-06 & 5.5787e-13 & 7  & 2.6439e-04 & 1.9247e-09 & 2.5535e-15 \\
24 & 2.3567e-03 & 7.0693e-08 & 9.9526e-14 & 8  & 2.1376e-04 & 1.4599e-10 & 4.8850e-15 \\
27 & 1.4045e-02 & 1.3842e-08 & 8.6272e-14 & 9  & 1.6540e-04 & 1.1393e-11 & 4.8850e-15 \\
30 & 5.0550e-03 & 3.2780e-10 & 1.1033e-13 & 10 & 1.3956e-04 & 9.0894e-13 & 4.5519e-15 \\
33 & 2.6273e-02 & 5.6235e-11 & 9.6746e-14 & 11 & 1.1315e-04 & 7.4052e-14 & 5.1070e-15 \\
36 & 3.0467e-03 & 9.7755e-14 & 1.1386e-13 & 12 & 9.8224e-05 & 6.2172e-15 & 5.5511e-15 \\
39 & 3.0713e-03 & 7.3777e-14 & 1.1478e-13 & 13 & 8.2258e-05 & 1.1102e-15 & 5.2180e-15 \\
\bottomrule
\end{tabular}

\end{table}


\begin{table}[h]
\centering
\caption{Reconstruction errors for $f$ with $\delta = 2\pi/3$ by the sinh-type regularized non-periodic sampling formula (\ref{sinhformula}) and  periodic sampling formula (\ref{periodicsinhformula}). }
\label{tab:4pi6}
\begin{tabular}{c c c c c c c c}
\toprule
{N} & {no} & {Gaussian} & {sinh} &{N} & {periodic-no}& {periodic-Gaussian} & {periodic-sinh} \\
\cmidrule(lr){2-4} \cmidrule(lr){6-8}
6  & 2.7989e-02 & 2.0314e-02 & 1.5307e-03 & 2  & 3.9162e-03 & 6.3273e-03 & 1.2798e-03 \\
9  & 1.8443e-02 & 7.7732e-03 & 7.6506e-05 & 3  & 1.9236e-03 & 5.2579e-04 & 8.4274e-06 \\
12 & 7.7737e-03 & 1.5679e-03 & 2.8656e-06 & 4  & 1.1439e-03 & 6.2950e-05 & 1.7286e-07 \\
15 & 1.6494e-02 & 8.4347e-04 & 2.5902e-07 & 5  & 7.5885e-04 & 8.7399e-06 & 1.8372e-09 \\
18 & 1.5889e-02 & 3.6110e-04 & 3.2159e-08 & 6  & 5.4044e-04 & 1.3315e-06 & 4.8339e-11 \\
21 & 8.8869e-03 & 3.9871e-05 & 1.2252e-09 & 7  & 4.0455e-04 & 2.1281e-07 & 4.5149e-12 \\
24 & 4.9444e-03 & 3.4791e-06 & 3.3547e-11 & 8  & 3.1424e-04 & 3.5626e-08 & 2.1327e-13 \\
27 & 3.2907e-03 & 2.5919e-07 & 1.0361e-12 & 9  & 2.5116e-04 & 6.0883e-09 & 7.8826e-15 \\
30 & 3.4577e-02 & 4.7206e-07 & 1.7763e-12 & 10 & 2.0535e-04 & 1.0720e-09 & 3.2196e-15 \\
33 & 1.2174e-03 & 1.2447e-09 & 3.5131e-13 & 11 & 1.7104e-04 & 1.9056e-10 & 3.3307e-15 \\
36 & 1.8003e-03 & 1.1162e-10 & 3.6354e-13 & 12 & 1.4467e-04 & 3.4629e-11 & 4.6629e-15 \\
39 & 7.2980e-04 & 4.9166e-12 & 3.6863e-13 & 13 & 1.2396e-04 & 6.3174e-12 & 4.3299e-15 \\
\bottomrule
\end{tabular}

\end{table}

\begin{table}[h]
\centering
\caption{Reconstruction errors for $f$ with $\delta = 5\pi/6$ by  the sinh-type regularized non-periodic sampling formula (\ref{sinhformula}) and  periodic sampling formula (\ref{periodicsinhformula}). }
\label{tab:5pi6}
\begin{tabular}{c c c c c c c c}
\toprule
{N} & {no} & {Gaussian} & {sinh} &{N} & {periodic-no}& {periodic-Gaussian} & {periodic-sinh} \\
\cmidrule(lr){2-4} \cmidrule(lr){6-8}
6  & 4.1574e-02 & 3.3467e-02 & 8.4079e-03 & 2  & 2.0004e-03 & 8.4344e-03 & 8.0511e-03 \\
9  & 5.3377e-02 & 3.6541e-02 & 4.4741e-03 & 3  & 1.5004e-04 & 1.7230e-03 & 4.1051e-04 \\
12 & 7.0689e-03 & 3.1258e-03 & 1.0807e-04 & 4  & 3.8052e-04 & 4.8913e-04 & 1.2963e-05 \\
15 & 6.2614e-03 & 1.7066e-03 & 3.5911e-05 & 5  & 5.1051e-05 & 1.5351e-04 & 9.3381e-07 \\
18 & 3.2404e-03 & 4.5258e-04 & 3.3881e-06 & 6  & 1.3417e-04 & 5.1311e-05 & 5.9974e-08 \\
21 & 3.6685e-03 & 2.3029e-04 & 6.4206e-07 & 7  & 2.0110e-05 & 1.8634e-05 & 3.5508e-08 \\
24 & 4.0442e-03 & 8.9304e-05 & 2.7462e-07 & 8  & 6.3300e-05 & 7.0514e-06 & 2.2466e-09 \\
27 & 8.7118e-04 & 7.2529e-06 & 8.4722e-09 & 9  & 8.7798e-06 & 2.6534e-06 & 8.7890e-10 \\
30 & 1.1032e-03 & 3.7715e-06 & 6.2906e-09 & 10 & 3.5371e-05 & 1.0116e-06 & 9.5156e-11 \\
33 & 1.3324e-03 & 1.0206e-06 & 8.1806e-10 & 11 & 4.0100e-06 & 4.0379e-07 & 2.2419e-11 \\
36 & 5.7283e-04 & 1.6131e-07 & 1.4600e-10 & 12 & 2.2061e-05 & 1.6415e-07 & 3.8844e-12 \\
39 & 4.4184e-04 & 5.9360e-08 & 1.5924e-11 & 13 & 1.8064e-06 & 6.5601e-08 & 5.2991e-13 \\
\bottomrule
\end{tabular}

\end{table}

We show experimental results for the sinh-type regularized non-periodic and periodic sampling series in Tables \ref{tab:3pi6}, \ref{tab:4pi6}, \ref{tab:5pi6}. We compared the sinh-type regularized non-periodic sampling formula (\ref{sinhformula}) and  periodic sampling formula (\ref{periodicsinhformula}), compared with   the Gaussian regularized non-periodic sampling formula (\ref{gaussianformula}) and  periodic sampling formula (\ref{periodicgaussianformula}). For the non-periodic case, N represents the number of sample points, while for the periodic case, N multiplied by M (which is 3 here) gives the number of sample points. This means that data in the same row represents a comparison between the two cases using the same amount of  sample points. As predicted by our analysis,  the sinh-type regularized sampling formulas exhibit  a faster  convergence rate compared  with those of Gaussian regularized sampling formulas. 

Figures \ref{Sinh-non-periodic} and \ref{Sinh-periodic} show  the logarithm of reconstruction errors  by sinh-type regularizd nonuniform sampling series (including both non-periodic (\ref{sinhformula}) and periodic (\ref{periodicsinhformula}) versions, with $\delta \in \{\pi/2, 2\pi/3, 5\pi/6\}$). These figures compare the reconstruction errors with the main terms of the theoretical errors, expressed as $\exp(-(N-1)(\pi-\delta))$ and $\exp(-(N-1)M(\pi-\delta))$. As shown in Figures \ref{Sinh-non-periodic} and \ref{Sinh-periodic}, the numerical results are consistent with our theoretical error estimates.

 \begin{figure}[h]
   \centering
     \includegraphics[width=0.7\textwidth]{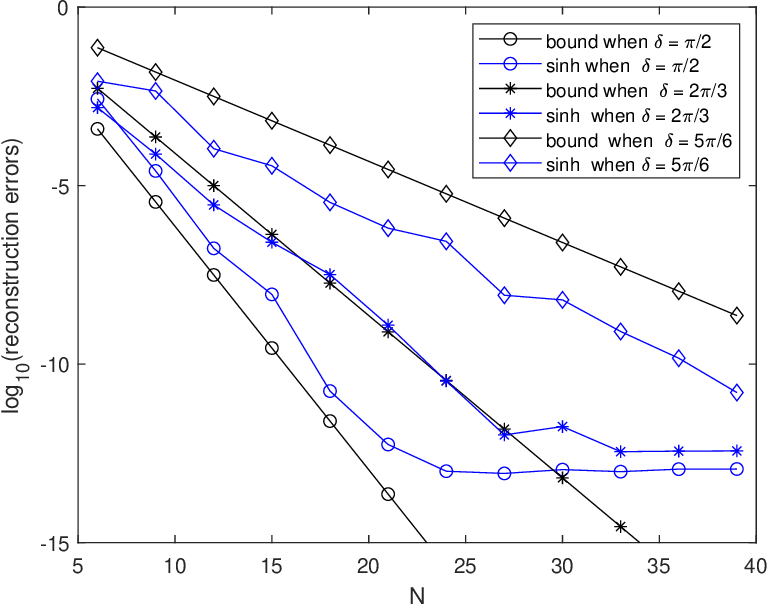}
     \caption{Logarithm of reconstruction errors by the sinh-type regularized non-periodic sampling formula (\ref{sinhformula}). ``bound''$=\log_{10}(\exp(-(N-1)(\pi-\delta))$.}
     \label{Sinh-non-periodic}
 \end{figure}

  \begin{figure}[h]
   \centering
     \includegraphics[width=0.7\textwidth]{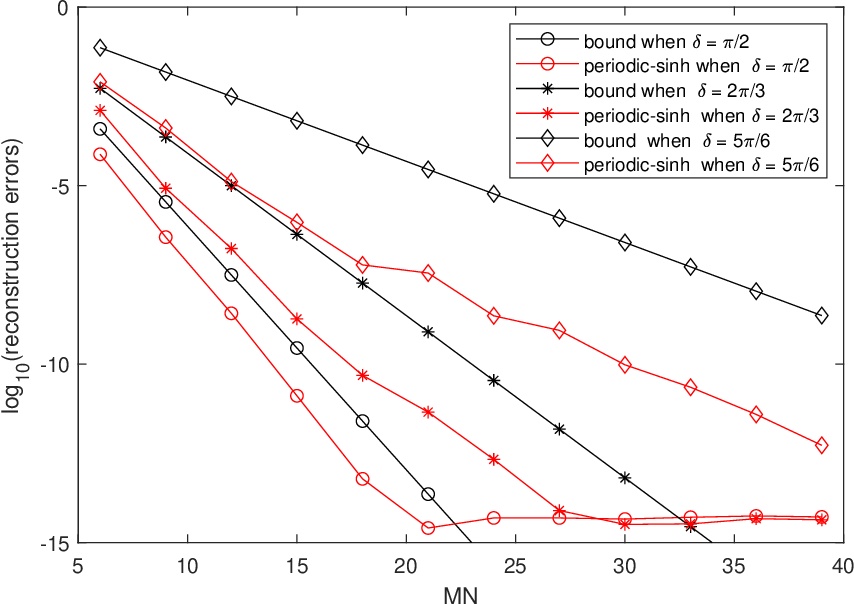}
      \caption{Logarithm of reconstruction errors by the sinh-type regularized periodic sampling formula (\ref{periodicsinhformula}). ``bound''$=\log_{10}(\exp(-(N-1)M(\pi-\delta))$.}
     \label{Sinh-periodic}
 \end{figure}

\section*{Data availability}
No data was used for the research described in the article.

 \section*{Acknowledgment}
 The authors express sincere appreciation to the reviewers for their thorough and patient review of the paper and for pointing out numerous issues. The authors would like to express their gratitude to Yunfei Yang for his assistance in programming and for his guidance in understanding \cite{yang2025exponential}. This work was supported in part
by the National Natural Science Foundation of China under grant 12461019 and  by Jiangxi Provincial Natural Science Foundation(20224BAB201010).

  \section*{Declarations}
\textbf{Competing interests} The authors declare that they have no competing interests.

\bibliographystyle{plain}
\bibliography{sample}

@article{jagerman1966bounds,
  title = {Bounds for truncation error of the sampling expansion},
  author = {Jagerman, D. L.},
  journal = {SIAM Journal on Applied Mathematics},
  volume = {14},
  number = {4},
  pages = {714--723},
  year = {1966},
  publisher = {SIAM}
}

@article{selva2008functionally,
  title = {Functionally weighted {L}agrange interpolation of band-limited signals from nonuniform samples},
  author = {Selva, J.},
  journal = {IEEE Transactions on Signal Processing},
  volume = {57},
  number = {1},
  pages = {168--181},
  year = {2008},
  publisher = {IEEE}
}

@article{Flornes1999,
	author = {Flornes, K. M. and Lyubarskii, Y. and Seip, K.},
	title = {A direct interpolation method for irregular sampling},
	journal = {Applied and Computational Harmonic Analysis},
	year = {1999},
	volume = {7},
	pages = {305--314}
}

@article{Kircheis2022,
	author = {Kircheis, M. and Potts, D. and Tasche, M.},
	title = {On regularized {S}hannon sampling formulas with localized sampling},
	journal = {Sampling Theory, Signal Processing, and Data Analysis},
	year = {2022},
	volume = {20},
	pages = {Paper No. 20, 34 pp.}
}

@article{filbir2023regularized,
  title = {Regularized {S}hannon sampling formulas related to the special affine {F}ourier transform},
  author = {Filbir, F. and Tasche, M.},
  journal = {Journal of {F}ourier Analysis and Applications},
  year = {2025},
  volume = {31},
  pages = {40},
  doi = {10.1007/s00041-025-10173-8},
  publisher = {Springer}
}

@book{watson1922treatise,
  title={A treatise on the theory of Bessel functions},
  author={Watson, George Neville},
  year={1944},
  publisher={Cambridge University Press}
}

@article{kircheis2024optimal,
  title = {Some remarks on regularized {S}hannon sampling formulas},
  author = {Kircheis, M. and Potts, D. and Tasche, M.},
  journal = {BIT Numerical Mathematics},
  year = {2025},
  volume = {65}
}

@article{Kircheis2024,
	author = {Kircheis, M. and Potts, D. and Tasche, M.},
	title = {On numerical realizations of {S}hannon's sampling theorem},
	journal = {Sampling Theory, Signal Processing, and Data Analysis},
	year = {2024},
	volume = {22},
	pages = {Paper No. 13, 33 pp.}
}

@article{Lin2017,
	author = {Lin, R. and Zhang, H.},
	title = {Convergence analysis of the {G}aussian regularized {S}hannon sampling series},
	journal = {Numerical Functional Analysis and Optimization},
	year = {2017},
	volume = {38},
	pages = {224--247}
}

@article{chen2023hyper,
  title = {Hyper-{G}aussian regularized {W}hittaker--{K}otelnikov--{S}hannon sampling series},
  author = {Chen, L. and Wang, Y. and Zhang, H.},
  journal = {Analysis and Applications},
  volume = {21},
  number = {02},
  pages = {329--352},
  year = {2023},
  publisher = {World Scientific}
}

@article{Micchelli2009,
	author = {Micchelli, C. A. and Xu, Y. and Zhang, H.},
	title = {Optimal learning of bandlimited functions from localized sampling},
	journal = {Journal of Complexity},
	year = {2009},
	volume = {25},
	pages = {85--114}
}

@book{Plonka2023,
	author = {Plonka, G. and Potts, D. and Steidl, G. and Tasche, M.},
	title = {Numerical {F}ourier Analysis},
	edition = {Second},
	publisher = {Birkh\"{a}user/Springer},
	address = {Cham},
	year = {2023}
}

@article{kircheis2025nonequispaced,
  title = {Nonequispaced fast {F}ourier transforms for bandlimited functions},
  author = {Kircheis, M. and Potts, D.},
  journal = {arXiv preprint arXiv:2502.07155},
  year = {2025}
}

@article{Qian2003,
	author = {Qian, L.},
	title = {On the regularized {W}hittaker-{K}otelnikov-{S}hannon sampling formula},
	journal = {Proceedings of the American Mathematical Society},
	year = {2003},
	volume = {131},
	pages = {1169--1176}
}

@article{wang2019gaussian,
  title={Gaussian regularized periodic nonuniform sampling series},
  author={Wang, Feng and Wu, Congwei and Chen, Liang},
  journal={Mathematical Problems in Engineering},
  volume={2019},
  number={1},
  pages={9109250},
  year={2019},
  publisher={Wiley Online Library}
}

@article{Tanaka2008,
	author = {Tanaka, K. and Sugihara, M. and Murota, K.},
	title = {Complex-analytic approach to the sinc-{G}auss sampling formula},
	journal = {Japan Journal of Industrial and Applied Mathematics},
	year = {2008},
	volume = {25},
	pages = {209--231}
}

@article{ghosh2023sampling,
  title = {Sampling and interpolation of periodic nonuniform samples involving derivatives},
  author = {Ghosh, R. and Selvan, A. A.},
  journal = {Results in Mathematics},
  volume = {78},
  number = {2},
  pages = {52},
  year = {2023},
  publisher = {Springer}
}

@article{maymon2011sinc,
  title = {Sinc interpolation of nonuniform samples},
  author = {Maymon, S. and Oppenheim, A. V.},
  journal = {IEEE Transactions on Signal Processing},
  volume = {59},
  number = {10},
  pages = {4745--4758},
  year = {2011},
  publisher = {IEEE}
}

@article{annaby2016bounds,
  title = {Bounds for truncation and perturbation errors of nonuniform sampling series},
  author = {Annaby, M. H. and Asharabi, R. M.},
  journal = {BIT Numerical Mathematics},
  volume = {56},
  pages = {807--832},
  year = {2016},
  publisher = {Springer}
}

@article{margolis2008nonuniform,
  title = {Nonuniform sampling of periodic bandlimited signals},
  author = {Margolis, E. and Eldar, Y. C.},
  journal = {IEEE Transactions on Signal Processing},
  volume = {56},
  number = {7},
  pages = {2728--2745},
  year = {2008},
  publisher = {IEEE}
}

@article{strohmer2006fast,
  title = {Fast reconstruction methods for bandlimited functions from periodic nonuniform sampling},
  author = {Strohmer, T. and Tanner, J.},
  journal = {SIAM journal on numerical analysis},
  volume = {44},
  number = {3},
  pages = {1073--1094},
  year = {2006},
  publisher = {SIAM}
}

@article{adcock2017density,
  title = {Density theorems for nonuniform sampling of bandlimited functions using derivatives or bunched measurements},
  author = {Adcock, B. and Gataric, M. and Hansen, A. C.},
  journal = {Journal of {F}ourier Analysis and Applications},
  volume = {23},
  number = {6},
  pages = {1311--1347},
  year = {2017},
  publisher = {Springer}
}

@incollection{zayed2001lagrange,
  title = {{L}agrange interpolation and sampling theorems},
  author = {Zayed, A. I. and Butzer, P. L.},
  booktitle = {Nonuniform Sampling: Theory and Practice},
  pages = {123--168},
  year = {2001},
  publisher = {Springer}
}

@article{asharabi2023periodic,
  title = {Periodic nonuniform sinc-{G}auss sampling},
  author = {Asharabi, R. M.},
  journal = {Filomat},
  volume = {37},
  number = {1},
  pages = {279--292},
  year = {2023}
}

@article{asharabi2024modification,
  title = {A modification of the periodic nonuniform sampling involving derivatives with a {G}aussian multiplier},
  author = {Asharabi, R. M. and Khirallah, M. Q.},
  journal = {Calcolo},
  volume = {61},
  number = {4},
  pages = {58},
  year = {2024},
  publisher = {Springer}
}

@article{Wei1998,
	author = {Wei, G. W.},
	title = {Quasi wavelets and quasi interpolating wavelets},
	journal = {Chemical Physics Letters},
	year = {1998},
	volume = {296},
	pages = {215--222}
}

@article{Wei2000,
	author = {Wei, G. W.},
	title = {Discrete singular convolution method for the sinc-{G}ordon equation},
	journal = {Physica D: Nonlinear Phenomena},
	year = {2000},
	volume = {137},
	pages = {247--259}
}

@article{Wei1997,
	author = {Wei, G. W. and Zhang, D. S. and Kouri, D. J. and Hoffman, D. K.},
	title = {{L}agrange distributed approximating functionals},
	journal = {Physical Review Letters},
	year = {1997},
	volume = {79},
	pages = {775--779}
}

@article{asharabi2022multidimensional,
  title = {On multidimensional sinc-{G}auss sampling formulas for analytic functions},
  author = {Asharabi, Rashad M. and Al-Haddad, Felwah H.},
 journal = {Electronic Transactions on Numerical Analysis},
  volume = {55},
  pages = {242--262},
  year = {2022},
  publisher = {Verlag der {\"O}sterreichischen Akademie der Wissenschaften}
}

@article{yang2025exponential,
  title = {Exponential Approximation of Band-Limited Functions from Nonuniform Sampling by Regularization Methods},
  author = {Yang, Y. and Zhang, H.},
  journal = {Constructive Approximation},
  volume = {61},
  number = {1},
  pages = {149--177},
  year = {2025},
  publisher = {Springer}
}

@article{lin2019optimal,
  title = {An optimal convergence rate for the {G}aussian regularized {S}hannon sampling series},
  author = {Lin, R.},
  journal = {Numerical Functional Analysis and Optimization},
  volume = {40},
  number = {3},
  pages = {241--258},
  year = {2019},
  publisher = {Taylor \& Francis}
}

@article{chen2019sharp,
  title = {Sharp exponential bounds for the {G}aussian regularized {W}hittaker--{K}otelnikov--{S}hannon sampling series},
  author = {Chen, L. and Zhang, H.},
  journal = {Journal of Approximation Theory},
  volume = {245},
  pages = {73--82},
  year = {2019},
  publisher = {Elsevier}
}

@book{levin1996lectures,
  title = {Lectures on entire functions},
  author = {Levin, B. Ya.},
  volume = {150},
  year = {1996},
  publisher = {American Mathematical Society}
}

@article{JOHANSSON20152415,
  title = {Time-interleaved {ADC} calibration},
  author = {Johansson, H.},
  journal = {IEEE Transactions on Circuits and Systems I: Regular Papers},
  volume = {62},
  number = {9},
  pages = {2415--2424},
  year = {2015},
  doi = {10.1109/TCSI.2015.2415172}
}
\end{document}